\def \N{\mathbb{N}}
\def \sM{\mathscr{M}}
\def \sZ{\mathscr{Z}}
\def \Z{\mathbb{Z}}
\newtheorem{Th}{Theorem}[section]
\newtheorem{Thm}[Th]{Theorem}
\newtheorem{Prop}[Th]{Proposition}
\newtheorem{Lem}[Th]{Lemma}
\newtheorem{Fact}[Th]{Fact}
\title{Dp-minimal expansions of discrete ordered abelian groups}
\author{Erik Walsberg}
\address
{Department of Mathematics\\University of Illinois at Urbana-Champaign\\1409 West Green Street\\Urbana, IL 61801}
\email{erikw@illinois.edu}
\urladdr{http://www.math.illinois.edu/\textasciitilde erikw}
\date{\today}
\begin{document}

\begin{abstract}
If $\sZ$ is a dp-minimal expansion of a discrete ordered abelian group $(Z,<,+)$ and $\sZ$ does not admit a nontrivial definable convex subgroup then $\sZ$ is interdefinable with $(Z,<,+)$ and $(Z,<,+)$ is elementarily equivalent to $(\Z,<,+)$.
\end{abstract}

\maketitle

\section{Introduction}
\noindent Suppose $T$ is a complete first order theory and $\sM$ is an $\aleph_0$-saturated model of $T$ with domain $M$.
Then $T$ is \textit{not} \textbf{dp-minimal} if there are formulas $\phi(x, \bar{y}), \varphi(x,\bar{z})$ and sequences $(\bar{a}_i)_{i \in \N}, (\bar{b}_i)_{i \in \N}$ of tuples from $\sM$ such that for all $i,j \in \N$ there is a $c \in M$ such that
$\sM \models \{ \phi(c,\bar{a}_i), \varphi(c,\bar{b}_j) \}$ and $\sM \models \{ \neg \phi(c,\bar{a}_m), \neg \varphi(c, \bar{b}_n)\}$ for all $m \neq i$, $n \neq j$.
A structure is dp-minimal if its theory is.
Dp-minimal theories are NIP theories and the theory of dp-minimal theories forms a well-behaved subtheory of the theory of NIP theories.
Many structures whose definable sets are ``tame topological" objects are dp-minimal.
Dp-minimal theories include o-minimal, C-minimal, P-minimal, strongly minimal theories, and Presburger arithmetic.

\medskip \noindent Two structures on a common domain $M$ are \textbf{interdefinable} if they define the same subsets of $M^n$ for all $n$.
An expansion of a structure $\sM$ is \textbf{proper} if it is not interdefinable with $\sM$.
The following is \cite[Proposition 6.6]{ADHMS}:

\begin{Thm}
\label{thm:adms}
There are no proper dp-minimal expansions of $(\Z,<,+)$.
\end{Thm}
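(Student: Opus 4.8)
The goal is to show that every subset of $\Z^{n}$ definable in $\sZ$ is already definable in $(\Z,<,+)$. Working in the Presburger language $\{<,+,-,0,1,(\equiv_m)_{m\geq 2}\}$, in which the theory has quantifier elimination, the definable subsets of $\Z$ are exactly the \emph{eventually periodic} sets: those $X$ for which there are $N\in\N$ and $p\geq 1$ with $x\in X \Leftrightarrow x+p\in X$ whenever $|x|>N$. Thus the statement reduces to two tasks: (i) every $\sZ$-definable $X\subseteq\Z$ is eventually periodic; and (ii) this unary information lifts to all arities. I would pass to a saturated elementary extension $\sZ^{*}\succeq\sZ$ so that indiscernible-sequence technology is available, and read the conclusion back on the standard model at the end.

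The heart of the proof is (i), which I would establish by contradiction. Suppose some $\sZ$-definable $X$ is not eventually periodic, say to the right: then for every period $p\geq 1$ there are arbitrarily large $x$ for which exactly one of $x$ and $x+p$ lies in $X$. This exhibits genuine complexity in two independent parameters --- the scale $p$ and the position $x$ --- and the plan is to convert it into an \emph{ict-pattern of depth $2$}, that is, formulas $\phi(w;\bar y),\varphi(w;\bar z)$ and sequences $(\bar a_i),(\bar b_j)$ realizing every membership/non-membership combination of the kind excluded in the definition of dp-minimality. Concretely I would use the order and the translation action of the group to code scales along one sequence and positions along the other, and then invoke Ramsey's theorem and compactness inside $\sZ^{*}$ to extract the two sequences as mutually indiscernible while keeping the two coordinates independent. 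The discreteness of $\Z$ --- in particular the definable successor function, which lets one refer to an exact scale $p$ --- is what makes this coding possible.

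For (ii) I would run an induction on arity modelled on Presburger cell decomposition. Given $X\subseteq\Z^{n+1}$ definable, its fibers $X_{\bar a}\subseteq\Z$ form a single $\sZ$-definable family, so a \emph{uniform} version of (i) --- obtained by applying the indiscernibility argument to the defining formula and extracting uniform bounds by compactness --- shows that the fibers are eventually periodic with the middle range $N$, the period $p$, and the number of pieces all bounded independently of $\bar a$. The fibers are then governed by finitely many $\sZ$-definable functions of $\bar a$ (the endpoints and phase of the periodic pattern), and dp-minimality keeps the auxiliary sets that describe these functions effectively one-dimensional, so the functions are themselves Presburger; reassembling the cells shows $X$ is Presburger.

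The main obstacle is squarely step (i): turning a single non-eventually-periodic unary set into an honest depth-$2$ pattern, i.e.\ arranging that the complexity in scale and in position survives as two genuinely independent, mutually indiscernible sequences. The uniformity required in (ii) is a secondary technical point that follows the same indiscernibility analysis applied to a formula with parameters; the remaining steps --- the reduction to eventually periodic sets, the passage to $\sZ^{*}$ and back, and the final reassembly of cells --- are routine.
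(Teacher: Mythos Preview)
The paper does not prove this theorem: it is quoted verbatim as \cite[Proposition~6.6]{ADHMS} and used as a black box later (inside the proof of Proposition~\ref{prop:proof}). So there is no ``paper's own proof'' to compare your attempt to; strictly speaking, any self-contained argument you give goes beyond what the paper does.

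That said, your outline is the right shape, and it is worth noting where it touches the tools the paper already invokes. For step~(ii), the lift from unary definable sets to all arities, the paper explicitly records the Michaux--Villemaire theorem (and Cluckers' generalization, Theorem~\ref{thm:cluckers}) as the relevant device: once every $\sZ$-definable subset of $\Z$ is Presburger, interdefinability follows immediately. This is cleaner than your proposed induction on arity via uniform cell decomposition; in particular it sidesteps the potential circularity in your sketch, where the endpoint functions of the fibers have graphs in $\Z^{n+1}$ and you would need the $(n+1)$-ary case to conclude they are Presburger. For step~(i), your plan to encode scale and position as two independent sequences and extract an ict-pattern of depth~$2$ is exactly the mechanism behind the ADHMS argument, and the paper's Lemma~\ref{lem:sparse} carries out a closely related construction (with $\phi(x,y):=|x-y|<h$ and $\varphi(x,y):=x-y\in A$) in the nonstandard setting. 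So your identification of~(i) as the crux is correct; the missing content is the concrete choice of formulas and parameters, which you have not written down but which does go through along the lines of Lemma~\ref{lem:sparse}.
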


\noindent This result generalizes to strong expansions of $(\Z,<,+)$ \cite[Corollary 2.20]{DG}.
We give another kind of generalization.

\medskip \noindent Throughout this paper $(Z,<,+)$ is a discrete ordered abelian group, $1_Z$ is the minimal positive element of $Z$, $O := \{ k1_Z : k \in \Z \}$, and $\sZ$ is an expansion of $(Z,<,+)$.
A subset $X$ of $Z$ is \textbf{convex} if whenever $x,x' \in X$ and $x < y < x'$ then $y \in X$.
A subgroup of $(Z,<,+)$ is non-trivial if it is not $Z$ or $\{0\}$.
Note $O$ is a convex subgroup of $(Z,<,+)$ and $(O,<,+)$ is canonically isomorphic to $(\Z,<,+)$.
A $\Z$-group is an ordered abelian group which is elementarily equivalent to $(\Z,<,+)$.

\begin{Thm}
\label{thm:main}
Suppose $\sZ$ is dp-minimal.
Then exactly one of the following holds:
\begin{enumerate}
    \item $\sZ$ defines a nontrivial convex subgroup of $(Z,<,+)$,
    \item $\sZ$ is interdefinable with $(Z,<,+)$ and $(Z,<,+)$ is a $\Z$-group.
\end{enumerate}
\end{Thm}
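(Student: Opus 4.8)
The plan is to reduce the general discrete ordered abelian group case to Theorem~\ref{thm:adms} by exploiting the distinguished convex subgroup $O \cong (\Z,<,+)$, and to show that when $\sZ$ admits no nontrivial definable convex subgroup, the structure is forced to be very ``small'' near $0$ in a way that makes it interdefinable with $(Z,<,+)$. First I would observe that the two alternatives in Theorem~\ref{thm:main} are mutually exclusive: if $(Z,<,+)$ is a $\Z$-group then every nontrivial convex subgroup would have to be $O$ or a subgroup containing it, but in a $\Z$-group the only convex subgroups are $\{0\}$ and $Z$, and $O = Z$ precisely when $(Z,<,+) \cong (\Z,<,+)$; in any case $O$ is definable without parameters as $\{0\}$'s ``Archimedean block'' only when it is trivial, so (1) and (2) cannot both hold. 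Thus it suffices to prove that the negation of (1) implies (2).

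So assume $\sZ$ is dp-minimal and defines no nontrivial convex subgroup of $(Z,<,+)$. The key structural step is to analyze $O$: since $\sZ$ is NIP (being dp-minimal) and $(Z,<,+)$ has a definable linear order, the externally or internally definable convex subgroups are tightly constrained. I would argue that $O$ must in fact be all of $Z$ --- for otherwise one produces a nontrivial convex subgroup: either $O$ itself becomes definable (using dp-minimality and the fact that the collection of convex subgroups is small, via a compactness/honest-definition argument as in the NIP theory of ordered abelian groups), or one finds a definable convex subgroup squeezed between $O$ and $Z$ using that the quotient $(Z/O,<,+)$ is a densely ordered abelian group on which $\sZ$ would induce extra dp-minimal structure. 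Concretely, I expect to use that a dp-minimal expansion of a densely ordered abelian group is either o-minimal-like or has a definable nontrivial convex subgroup (cf.\ known dp-minimal ordered group results), and in either case pull back a nontrivial definable convex subgroup of $(Z,<,+)$, contradicting our hypothesis. Hence $O = Z$, i.e.\ $(Z,<,+) \cong (\Z,<,+)$; in particular it is a $\Z$-group.

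Once $(Z,<,+) = (\Z,<,+)$ (up to isomorphism), interdefinability of $\sZ$ with $(Z,<,+)$ is exactly Theorem~\ref{thm:adms}: there are no proper dp-minimal expansions of $(\Z,<,+)$. This gives alternative (2) and completes the proof. The main obstacle I anticipate is the middle step --- showing $O = Z$ --- and specifically ruling out the ``intermediate'' scenario where $(Z,<,+)$ is a proper $\Z$-group (so $O \subsetneq Z$) without $\sZ$ defining any nontrivial convex subgroup. Here one must genuinely use dp-minimality (not just NIP): the hard part is leveraging the one-dimensionality of dp-rank to show that the induced structure on the dense quotient $Z/O$, or on $O$ itself, cannot avoid producing a definable convex cut. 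I would expect this to go through an analysis of definable unary sets in $\sZ$ and a forking/dp-rank computation showing that the cut defining $O$ (the set of elements ``infinitesimally close to $0$ relative to a fixed positive element'') is actually definable, or else that its failure to be definable forces an independence pattern violating dp-minimality.
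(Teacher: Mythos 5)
Your proposal's central step --- that if $\sZ$ defines no nontrivial convex subgroup then $O$ must equal $Z$, so that $(Z,<,+)\cong(\Z,<,+)$ and Theorem~\ref{thm:adms} applies directly --- is false, and this is a fatal gap rather than a technical one. Any $\aleph_0$-saturated model of Presburger arithmetic is a discrete ordered abelian group with $O \subsetneq Z$ which admits no nontrivial \emph{definable} convex subgroup (it is a $\Z$-group, and by Theorem~\ref{thm:zgroup} $\Z$-groups have none); it has an abundance of nontrivial convex subgroups, $O$ among them, but none of them is definable, and no amount of dp-rank computation will make $O$ definable or produce a definable convex subgroup between $O$ and $Z$. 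So the ``intermediate scenario'' you hope to rule out --- $(Z,<,+)$ a proper $\Z$-group with $O \subsetneq Z$ and no definable nontrivial convex subgroup --- is not a pathology to be excluded: it is the generic case the theorem must handle, and the paper in fact \emph{passes to} an $\aleph_0$-saturated elementary extension, putting itself squarely in that case. For the same reason your argument for mutual exclusivity (``in a $\Z$-group the only convex subgroups are $\{0\}$ and $Z$'') is incorrect as stated; what is true, and what Theorem~\ref{thm:zgroup} supplies, is that a $\Z$-group has no nontrivial \emph{definable} convex subgroup, which is what exclusivity actually needs since interdefinable structures have the same definable sets.

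The correct route keeps $O$ as a proper, externally definable convex subgroup and exploits it rather than eliminating it. By Fact~\ref{fact:externally}, the expansion $(\sZ,O)$ by a predicate for $O$ is still dp-minimal, so Theorem~\ref{thm:adms} applies to the induced structure on $O\cong\Z$ and shows that every $(\sZ,O)$-definable subset of $O$ is eventually periodic. Given a definable $X\subseteq Z$, one then considers the trace of $X$ on each coset of $O$, collects the ``period points'' into definable sets $P_u, P_\ell$ meeting each coset at most once, shows these are finite by a dp-rank argument (Lemma~\ref{lem:sparse}), and uses the absence of definable convex subgroups to get a maximum-element principle (Lemma~\ref{lem:max}) that glues the finitely many periodic pieces into a $(Z,<,+)$-definable description of $X$. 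Finally, knowing that all \emph{unary} definable sets are $(Z,<,+)$-definable does not by itself give interdefinability in higher arities; that last step requires Cluckers' theorem (Theorem~\ref{thm:cluckers}), which your outline does not account for.
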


\noindent There are indeed dp-minimal expansions of discrete ordered abelian groups which define nontrival convex subgroups.
It is shown in \cite[Proposition 5.1]{JaSi-dp} that an ordered abelian group $\mathscr{G}$ is dp-minimal if and only if $|\mathscr{G}/n\mathscr{G}| < \aleph_0$ for all $n$.
Suppose $\mathscr{G}$ is a dp-minimal ordered abelian group.
Equip $\mathscr{G} \times \Z$ with the lexicographic order $<_{\text{Lex}}$.
Then $(\mathscr{G} \times \Z, <_{\text{Lex}})$ is a discrete ordered abelian group and $|(\mathscr{G} \times \Z)/n(\mathscr{G} \times \Z)| < \aleph_0$ for all $n$, so $(\mathscr{G} \times \Z,<_{\text{Lex}})$ is dp-minimal.
We now apply a corollary to Shelah's theorem on externally definable sets \cite{Shelah-strongly}, see \cite[Observation 3.8]{OnUs-dp}:

\begin{Fact}
\label{fact:externally}
Suppose $\sM = (M,<,\ldots)$ is a dp-minimal expansion of a linear order.
Then any expansion of $\sM$ by unary predicates defining convex subsets of $M$ is also dp-minimal.
\end{Fact}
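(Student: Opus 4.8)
The plan is to realise any such expansion as a reduct of the Shelah expansion of $\sM$, and then to invoke Shelah's theorem on externally definable sets in the form recorded in \cite[Observation 3.8]{OnUs-dp}.

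Fix a family $\mathcal{C}$ of convex subsets of $M$ and let $\sM'$ be the expansion of $\sM$ by a unary predicate for each member of $\mathcal{C}$. The key preliminary observation is that every convex $C \subseteq M$ is externally definable in $\sM$. To see this, pass to an $|M|^+$-saturated elementary extension $\sN \succeq \sM$. The cases $C = \emptyset$ and $C = M$ are trivial; otherwise write $M = L \sqcup C \sqcup R$, where $L$ is the set of elements of $M$ strictly below all of $C$ and $R$ the set of those strictly above, so that $L$ is an initial segment and $R$ a final segment of $(M,<)$. By saturation of $\sN$ one may pick $\alpha, \beta \in N$ realising the cuts $(L,\, C \cup R)$ and $(L \cup C,\, R)$ of $(M,<)$ (with $\alpha = \max L$ or $\beta = \min R$ when these exist), so that for suitable $\triangleleft, \triangleright \in \{<, \le\}$ one has $C \cup R = \{x \in M : \alpha \triangleright x\}$ and $L \cup C = \{x \in M : x \triangleleft \beta\}$, whence $C = \{x \in M : \alpha \triangleright x \wedge x \triangleleft \beta\}$. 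Thus $C$ is the trace on $M$ of an $\sN$-definable set, i.e.\ externally definable in $\sM$.

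Let $\sM^{\mathrm{Sh}}$ denote the Shelah expansion of $\sM$, namely the expansion by a predicate for every externally definable subset of every power $M^n$. By the previous paragraph each member of $\mathcal{C}$ is already named in $\sM^{\mathrm{Sh}}$, so after renaming $\sM'$ is a reduct of $\sM^{\mathrm{Sh}}$. Since $\sM$ is dp-minimal it is NIP, so Shelah's theorem \cite{Shelah-strongly} applies, and the consequence stated as \cite[Observation 3.8]{OnUs-dp} is exactly that $\sM^{\mathrm{Sh}}$ is again dp-minimal. It then remains to note that dp-minimality passes to reducts: a configuration witnessing that $\mathrm{Th}(\sM')$ is not dp-minimal, as in the definition at the start of the paper, uses only $\sM'$-formulas and parameters, and taking it inside a sufficiently saturated model of $\mathrm{Th}(\sM^{\mathrm{Sh}})$ together with its reduct to the language of $\sM'$, the very same data witnesses that $\mathrm{Th}(\sM^{\mathrm{Sh}})$ is not dp-minimal either, a contradiction. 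Hence $\sM'$ is dp-minimal.

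The only substantial ingredient is the implication that the Shelah expansion of an NIP structure of dp-rank $1$ again has dp-rank $1$; this is precisely where Shelah's theorem on externally definable sets enters, and it is the one step I would treat as a black box, quoting \cite[Observation 3.8]{OnUs-dp}. By contrast, the fact that convex subsets of a linear order become definable over a saturated elementary extension, and that dp-minimality is monotone under reducts, are both routine. So I expect the main obstacle to be nothing more than citing the correct form of Shelah's theorem; once that is in hand, the proof is a short assembly.
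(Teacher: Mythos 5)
Your proposal is correct and matches exactly how the paper justifies this statement: the paper gives no proof of its own but cites Shelah's theorem on externally definable sets via \cite[Observation 3.8]{OnUs-dp}, which is precisely the reduction you carry out (convex sets are traces of intervals over a saturated elementary extension, hence externally definable, and dp-minimality is preserved by the Shelah expansion and by reducts). The cut-realization argument you supply for external definability is the standard one and is sound.
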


\noindent It follows that any expansion of $\mathscr{G} \times \Z$ by convex subgroups is dp-minimal.

\medskip \noindent The proof of Theorem~\ref{thm:main} relies on previous work in the model theory of ordered abelian groups.
We now describe these results.

\medskip \noindent Michaux and Villemaire \cite{MiVi-Presburger} showed that an expansion $\sM$ of $(\Z,<,+)$ is interdefinable with $(\Z,<,+)$ if every $\sM$-definable subset of $\Z$ is $(\Z,<,+)$-definable.
This theorem was used in the proof of Theorem~\ref{thm:adms}.
We apply a generalization of this theorem due to Cluckers \cite[Theorem 5]{Cluckers}.
It not known if the assumption of $\aleph_0$-saturation in Theorem~\ref{thm:cluckers} is necessary.
This does not produce difficulties for us as we may pass to an elementary extension of $\sZ$ in the proof of Theorem~\ref{thm:main} without losing generality.

\begin{Thm}
\label{thm:cluckers}
Suppose $(Z,<,+)$ is a a $\Z$-group and suppose $\sZ$ is $\aleph_0$-saturated.
If every $\sZ$-definable subset of $Z$ is $(Z,<,+)$-definable then $\sZ$ and $(Z,<,+)$ are interdefinable.
\end{Thm}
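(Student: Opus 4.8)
The plan is to prove by induction on $n$ the statement $(\star_n)$: every $\sZ$-definable (with parameters) subset of $Z^n$ is $(Z,<,+)$-definable. Interdefinability of $\sZ$ and $(Z,<,+)$ is precisely the conjunction of all the $(\star_n)$, so this suffices. The base case $(\star_1)$ is the hypothesis. The organizing tool is quantifier elimination for Presburger arithmetic in the language $(<,+,1_Z,\{\equiv_m\}_m)$ and the attendant cell decomposition: a set is $(Z,<,+)$-definable iff it is a finite union of cells whose last-coordinate fibers are described by Presburger-linear bounds together with congruences. Via this decomposition, and using that the fibers of an $\sZ$-definable set are Presburger, $(\star_{n+1})$ reduces to showing that every $\sZ$-definable \emph{function} $f\colon D \to Z$ with $D \subseteq Z^n$ $\sZ$-definable has $(Z,<,+)$-definable graph. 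The essential difficulty is already visible for $n=1$: passing from subsets of $Z$ (controlled by hypothesis) to subsets of $Z^2$, i.e. to graphs of functions $Z \to Z$, since the fiber hypothesis says nothing directly about two-dimensional sets.

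The device that breaks this barrier is comparison with Presburger-linear test functions. For an $\sZ$-definable $f\colon Z \to Z$ and any Presburger-linear $\ell$, the comparison sets $\{x : f(x) < \ell(x)\}$ and $\{x : f(x) = \ell(x)\}$, together with the congruence sets $\{x : sf(x) \equiv j \ (\mathrm{mod}\ m)\}$, are subsets of $Z$, hence $(Z,<,+)$-definable by $(\star_1)$. By Presburger quantifier elimination a function is piecewise Presburger-linear exactly when it is pinned down by such comparisons: on a suitable Presburger partition of its domain it has the form $f(x) = (p x + q - r(x))/s$ with $r(x) \in \{0,\dots,s-1\}$ fixed by congruences. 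The task is therefore to extract from the comparison sets a \emph{finite} rational slope set $\{p_i/s_i\}$ and a Presburger partition of $Z$ on whose pieces $f$ realizes these slopes. Comparing $f$ with the functions $x \mapsto p x$ ($p,s \in Z$) and using that each comparison set is Presburger isolates the candidate slope regimes; $\aleph_0$-saturation supplies the crucial compactness that forces only finitely many regimes and a uniformly bounded denominator (in a nonstandard $\Z$-group one cannot read this finiteness off the standard integers, which is exactly why saturation is hypothesized). Assembling the congruence data then exhibits the graph of $f$ as a finite union of cells, giving $(\star_2)$.

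For the general inductive step one fibers over the first $n$ coordinates: writing $X \subseteq Z^{n+1}$ with fibers $X_a$, each $X_a$ is Presburger over $a$ and so is described by finitely many endpoint functions $a \mapsto e_i(a)$ together with residue and modulus data. Uniform boundedness of the number of endpoints and of the moduli across $a$ again comes from $\aleph_0$-saturation and compactness, after which one partitions $Z^n$ into finitely many $\sZ$-definable pieces on which these counts are constant. The endpoint functions $e_i\colon Z^n \to Z$ are then $\sZ$-definable, and showing each has Presburger graph is carried out by the same comparison technique relative to Presburger-linear functions of $a$, now invoking $(\star_n)$ in place of $(\star_1)$ to guarantee that the relevant comparison and partition sets, which live in $Z^n$, are $(Z,<,+)$-definable. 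I expect the main obstacle to be exactly this final reconstruction step: controlling the passage from the one-dimensional comparison data to a globally definable piecewise-linear description of $f$ (equivalently of each $e_i$) — guaranteeing that the slope partition, the finite slope set, and the residue data assembled from the comparisons are themselves $(Z,<,+)$-definable rather than merely $\sZ$-definable, with $\aleph_0$-saturation doing the work of converting the pointwise Presburger description of fibers into a uniform one.
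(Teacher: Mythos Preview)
The paper does not prove this theorem. It is stated as a citation: ``We apply a generalization of this theorem due to Cluckers \cite[Theorem 5]{Cluckers}.'' There is therefore no proof in the paper against which to compare your proposal.

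As for the proposal itself: what you have written is a strategy rather than a proof, and you correctly identify where the real work lies. The inductive scheme, the reduction to graphs of definable functions via cell decomposition, and the use of comparison sets $\{x : f(x) < \ell(x)\}$ against Presburger-linear $\ell$ are all in the spirit of Cluckers' argument. But the step you flag as the ``main obstacle'' is genuinely the heart of the matter and is not discharged by your sketch: from the fact that each individual comparison set is Presburger you must extract a \emph{single finite} family of slopes and intercepts that suffices, and you must show that the induced partition of the domain is itself Presburger-definable. Your appeal to $\aleph_0$-saturation at this point is gestural; one needs a concrete type whose finite satisfiability would contradict the hypothesis, and formulating that type requires more care than the sketch provides (in particular, one must uniformly bound not just the number of slope regimes but also the denominators and the moduli appearing in the congruence data, and these bounds interact). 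Cluckers' proof handles this via a careful analysis using what he calls the ``rectilinearization'' machinery for Presburger sets; your outline is compatible with that approach but does not yet supply the mechanism.
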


\noindent Standard results from the model theory of ordered abelian groups yields the following:

\begin{Thm}
\label{thm:zgroup}
A discrete ordered abelian group is a $\Z$-group if and only if it does not admit a nontrivial definable convex subgroup.
\end{Thm}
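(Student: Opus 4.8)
The plan is to prove the two implications separately; the forward one is a soft transfer argument and the converse requires manufacturing a \emph{definable} convex subgroup out of the possibly non-definable ``finite part'' $O=\{k1_Z:k\in\Z\}$. For the forward direction, assume $(Z,<,+)$ is a $\Z$-group and suppose toward a contradiction that some formula with parameters in $Z$ defines a convex subgroup $H$ with $\{0\}\neq H\neq Z$. The statement that this formula defines a subgroup which is convex, nonzero and proper is expressible by a single first-order sentence, true in $(Z,<,+)$ and hence, since $(Z,<,+)\equiv(\Z,<,+)$, true in $(\Z,<,+)$. But every subgroup of $\Z$ is $n\Z$, and $n\Z$ is convex only for $n\in\{0,1\}$ (for $n\geq2$ one has $0,n\in n\Z$ but $1\notin n\Z$), so $\Z$ has no nontrivial convex subgroup --- a contradiction.

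For the converse, suppose $(Z,<,+)$ is discrete and not a $\Z$-group. I first reduce to a divisibility statement: since multiplication by a prime $p$ is injective on $Z$, the indices $[Z:nZ]$ are multiplicative in $n$, so if $[Z:pZ]=p$ for every prime $p$ then $[Z:nZ]=n$ for all $n$, whence $Z$ is a $\Z$-group by Presburger's axiomatisation of $\mathrm{Th}(\Z,<,+)$; therefore there is a prime $p$ with $[Z:pZ]>p$ (recall that $[Z:pZ]\geq p$ always holds in a discrete group, the elements $0,1_Z,\dots,(p-1)1_Z$ lying in distinct cosets mod $pZ$). Now let $H:=pZ+O=\bigcup_{i=0}^{p-1}(i\cdot1_Z+pZ)$. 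Using that $(O,<,+)\cong(\Z,<,+)$ and that $O\cap pZ=pO$ (if $pz\in O$ but $z\notin O$ then $|pz|$ exceeds every multiple of $1_Z$, which is absurd), one gets $[H:pZ]=[O:pO]=p$ and hence $[Z:H]=[Z:pZ]/p>1$, so $H$ is a proper nontrivial subgroup. The crucial point is that $H$ is definable: it is a finite union of cosets of the definable subgroup $pZ$ by the definable element $1_Z$, even though $O$ itself may not be definable.

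Finally I pass to the largest convex subgroup contained in $H$, namely
\[
C:=\{\,g\in Z:\ x\in H\ \text{for all }x\in Z\text{ with }-|g|\le x\le|g|\,\},
\]
which is again definable. One checks that $C$ is convex, symmetric and contains $0$; the one substantive point is closure under addition, which holds because for $g_1,g_2\in C$ the whole interval $[0,|g_1|+|g_2|]$ lies in $H$ (for $x>|g_1|$ write $x=|g_1|+(x-|g_1|)$ and use that $H$ is a subgroup), together with $|g_1+g_2|\le|g_1|+|g_2|$. Then $C\subseteq H\neq Z$ makes $C$ proper, and $\{-1_Z,0,1_Z\}\subseteq O\subseteq H$ shows $1_Z\in C$, so $C$ is nontrivial --- the definable nontrivial convex subgroup we wanted.

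I expect the converse direction to be the only real work, and within it the two bookkeeping points above: that $H=pZ+O$ is definable whereas $O$ need not be, and that the convex core $C$ of $H$ is genuinely a subgroup. Everything else --- Presburger's axiomatisation and the computation of the indices $[Z:pZ]$ and $[H:pZ]$ --- is standard.
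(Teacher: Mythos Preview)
Your argument is correct in both directions. The forward implication via elementary transfer from $(\Z,<,+)$ is exactly what the paper leaves implicit. For the converse the paper takes a different, more citation-heavy route: it invokes the Robinson--Zakon theorem that an ordered abelian group is elementarily equivalent to an archimedean one iff it is \emph{regular} (so, for discrete groups, $\Z$-group $\Leftrightarrow$ regular), and then, given a failure of $n$-regularity, cites Belegradek for the fact that
\[
H=\{\,g\in Z:\text{every open interval of size }\geq n\text{ with endpoints in }[0,|g|]\text{ meets }nZ\,\}
\]
is a convex subgroup; $H$ is visibly definable, proper because $n$-regularity fails, and contains $O$.

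Your construction is more elementary and self-contained: you read off a bad prime $p$ directly from Presburger's axioms via the index $[Z:pZ]$, form the definable subgroup $pZ+O$, and then pass to its convex core $C$. In fact the two constructions give essentially the same subgroup, since $x\in pZ+O$ exactly when the length-$p$ interval $(x-p\cdot 1_Z,x]$ meets $pZ$, so your condition $[-|g|,|g|]\subseteq pZ+O$ is a repackaging of $p$-regularity on $[0,|g|]$. The payoff of your phrasing is that closure of $C$ under addition becomes a two-line computation rather than a reference; the payoff of the paper's phrasing is that it slots directly into the established Robinson--Zakon/Belegradek framework.
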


\noindent We summarize the results which  imply Theorem~\ref{thm:zgroup} in the remainder of this introduction.

\medskip\noindent An ordered abelian group is $n$-regular if every open interval with cardinality at least $n$ contains an $n$-divisible element, an ordered abelian group is \textbf{regular} if it is $n$-regular for all $n \in \N$.
Robinson and Zakon \cite{Ro-Za} showed that an ordered abelian group is elementarily equivalent to an archimedean ordered abelian group if and only if it is regular.
Any discrete archimedean ordered abelian group is isomorphic to $(\Z,<,+)$, so it follows in particular that $(Z,<,+)$ is a $\Z$-group if and only if it is regular.
(This may already be deduced from the classical work of Presburger).

\medskip \noindent Suppose $(Z,<,+)$ is not regular.
Fix $n \in \N$ such that $(Z,<,+)$ is not $n$-regular.
Let $H$ be the set of $g \in Z$ such that every open interval of cardinality at least $n$ with endpoints in $[0,|g|]$ contains an $n$-divisible element.
Then $H$ is a convex subgroup, see \cite[Theorem 3.1]{Belegradek} for a proof.
As $(Z,<,+)$ is not $n$-regular, we see that $H \neq Z$, and $O \subseteq H$, so $H$ is nontrivial.

\section{Proof}
\noindent In this section we suppose that $\sZ$ is $\aleph_0$-saturated.

\begin{Lem}
\label{lem:max}
Suppose $\sZ$ does not admit a nontrivial definable convex subgroup.
Then every nonempty definable bounded above subset $Z$ has a maximum element.
\end{Lem}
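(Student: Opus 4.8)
The plan is to argue by contradiction and to extract a nontrivial definable convex subgroup directly; I do not expect to need dp-minimality or $\aleph_0$-saturation for this particular statement, only discreteness of $(Z,<,+)$. So suppose $X\subseteq Z$ is definable, nonempty, bounded above, and has no greatest element. First I would replace $X$ by the definable set of its non-upper-bounds, $D:=\{z\in Z:\exists x\in X\ (z<x)\}$. This $D$ is definable over the same parameters, is nonempty (if $x_0\in X$ then $x_0-1_Z\in D$), is downward closed, and is proper: if $b$ is an upper bound of $X$ then $z\in D$ forces $z<x\le b$ for some $x\in X$, so $D\subseteq\{z:z<b\}$. Moreover $D$ has no greatest element: if $m=\max D$, pick $x\in X$ with $x>m$; discreteness gives $x\ge m+1_Z$, while $m+1_Z\notin D$ forces $x'\le m+1_Z$ for all $x'\in X$, so $x=m+1_Z$ is the greatest element of $X$, a contradiction.

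The crux is then the observation that a downward closed subset of a discrete ordered abelian group with no greatest element is closed under translation by $1_Z$: given $d\in D$, there is $d'\in D$ with $d'>d$ (no maximum), hence $d+1_Z\le d'$ (discreteness), hence $d+1_Z\in D$ (downward closed). Combining this with $D-1_Z\subseteq D$ (again downward closedness) yields $D+k1_Z\subseteq D$ for every $k\in\Z$, and applying this to $k$ and $-k$ gives $D+g=D$ for every $g\in O$.

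Finally I would pass to the stabilizer $H:=\{g\in Z:D+g=D\}$, which is definable from $D$ and is a subgroup of $(Z,<,+)$. It is convex: if $g_1\le h\le g_2$ with $g_1,g_2\in H$, put $g:=h-g_1$, so $0\le g\le g_2-g_1\in H$; then $D+g\subseteq D+(g_2-g_1)=D$ and $D\subseteq D+g$ (both inclusions follow from downward closedness together with $g\ge 0$ and $g_2-g_1-g\ge 0$), so $g\in H$ and hence $h\in H$. Since $O\subseteq H$ and $1_Z\in O$ with $1_Z>0$, we have $H\neq\{0\}$; and $H\neq Z$, since $D+g=D$ for all $g\in Z$ would force $D\in\{\emptyset,Z\}$, contrary to $D$ being nonempty and proper. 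Thus $H$ is a nontrivial definable convex subgroup, contradicting the hypothesis. The only genuinely delicate point is the discreteness step of the second paragraph, which fails for densely ordered groups; the rest is routine bookkeeping about cuts.
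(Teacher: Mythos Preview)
Your proof is correct and follows essentially the same approach as the paper: replace $X$ by its downward closure, pass to the additive stabilizer, and observe that this is a definable convex subgroup. The only cosmetic differences are that the paper argues directly (from the hypothesis it gets $H=\{0\}$, hence $Y+1_Z\neq Y$, hence a maximum) while you argue by contraposition (no maximum forces $1_Z\in H$, hence $H$ is nontrivial), and the paper uses the non-strict downward closure $\{y:\exists x\in X,\ y\le x\}$ rather than your strict version~$D$.
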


\begin{proof}
Suppose $X$ is a nonempty definable bounded above subset of $X$.
Let
$$ Y := \{ y \in Z : (\exists x \in X)( y \leq x )\}. $$
It suffices to show that $Y$ has a maximal element.
Let
$$ H := \{ g \in Z : Y + g = Y  \}. $$
be the additive stabilizer of $Y$.
It is easy to see that $H$ is a convex subgroup of $(Z,<,+)$.
(This is true for any downwards closed subset of any ordered abelian group).
As $Y$ is bounded above $H$ cannot be all of $Z$, it follows that $H = \{0\}$.
Therefore $Y + 1_Z \neq Y$.
As $Y \subseteq Y + 1_Z$ we see that $Y + 1_Z$ is not a subset of $Y$.
Therefore there is an $h \in Y$ such that $h + 1_Z \notin Y$.
This $h$ is the maximal element of $Y$.
\end{proof}

\begin{Lem}
\label{lem:sparse}
Suppose $\sZ$ is dp-minimal and $A \subseteq Z$ is definable.
Suppose 
$$|A \cap (g + O)| \leq 1 \quad \text{for all} \quad g \in Z.$$
Then $A$ is finite.
\end{Lem}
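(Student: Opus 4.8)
The plan is to prove the contrapositive in a strong form: if $A$ is infinite, then $\sZ$ is not dp-minimal, witnessed directly by the configuration described in the introduction. Fix an injective sequence $(a_i)_{i \in \N}$ of elements of $A$, and for $j \in \N$ set $t_j := j1_Z \in O$. Consider the two formulas
\[
\phi(x;y) \ :=\ (y \leq x) \ \wedge\ \neg\exists a\,(a \in A \ \wedge\ y < a \leq x), \qquad \varphi(x;z) \ :=\ (x - z \in A),
\]
where "$a \in A$" abbreviates a formula defining $A$. I would take $(a_i)_{i\in\N}$ as the parameters for $\phi$ and $(t_j)_{j\in\N}$ as the parameters for $\varphi$, and claim that for all $i,j \in \N$ the element $c := a_i + t_j$ satisfies $\phi(c;a_i)$, $\varphi(c;t_j)$, and $\neg\phi(c;a_m)$, $\neg\varphi(c;t_n)$ for all $m\neq i$, $n\neq j$ — exactly what dp-minimality forbids.

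The verification splits into three small steps. First, the rows are pairwise inconsistent: if $\phi(c;a_i)$ and $\phi(c;a_m)$ hold with $a_i\neq a_m$, say $a_i<a_m$, then $a_m\in A$ and $a_i<a_m\leq c$, contradicting the second conjunct of $\phi(c;a_i)$. Second, the columns are pairwise inconsistent: if $c-t_j\in A$ and $c-t_n\in A$ with $j\neq n$, these are distinct elements of $A$ whose difference $t_n-t_j=(n-j)1_Z$ lies in $O$, so they lie in a common coset of $O$ — contradicting the hypothesis on $A$. Third, for each $i,j$ the witness $c=a_i+t_j$ works: clearly $c\geq a_i$ and $c-t_j=a_i\in A$, so $\varphi(c;t_j)$ holds; and $\phi(c;a_i)$ holds provided $A$ meets no element of $(a_i,\,a_i+j1_Z]$. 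Once these three points are in place, the pairwise inconsistencies give $\neg\phi(c;a_m)$ for $m\neq i$ and $\neg\varphi(c;t_n)$ for $n\neq j$, producing the forbidden pattern and hence the contradiction with dp-minimality.

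The only step requiring any care is checking that $A$ contains no element of $(a_i,\,a_i+j1_Z]$, and this is the crux of the lemma. Since $(Z,<,+)$ is discrete with least positive element $1_Z$, that interval is exactly $\{a_i + k1_Z : 1\leq k\leq j\}$, which is contained in the single coset $a_i+O$; as $a_i\in A$ and no coset of $O$ meets $A$ in more than one point, none of these points lies in $A$. In particular this rules out the a priori nuisance that elements of $A$ might accumulate near $a_i$, which is the scenario one must exclude for $\phi(\cdot;a_i)$ to define a set large enough to contain all the $a_i+t_j$. Note that no use of $\aleph_0$-saturation is needed beyond extracting the sequence $(a_i)$, which only requires $A$ to be infinite; everything else is a direct computation in the ordered group.
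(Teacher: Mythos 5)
Your proof is correct, and its skeleton is the same as the paper's: both arguments produce the forbidden $2$-dimensional pattern using the witnesses $c=a_i+j1_Z$ and the same column formula $x-z\in A$, with the columns separated because two elements of $A$ cannot share a coset of $O$. The one genuine difference is the row formula. The paper uses $|x-y|<h$, where $h$ is an element lying above $O$ but below half of every gap $|a_i-a_j|$; producing such an $h$ requires a saturation/compactness step (indeed, realizing a type over the countably many parameters $a_i$). Your order-theoretic formula $(y\leq x)\wedge\neg\exists a\,(a\in A\wedge y<a\leq x)$ gets row-disjointness directly from the fact that each $a_m$ itself witnesses the failure of $\phi(c;a_i)$ for $i\neq m$, so no auxiliary element $h$ and no saturation are needed beyond extracting an infinite injective sequence from $A$. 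Your verification of the three required properties (rows pairwise inconsistent, columns pairwise inconsistent, each $a_i+j1_Z$ realizing its cell, using convexity of $O$ to identify $(a_i,a_i+j1_Z]$ with $\{a_i+k1_Z:1\leq k\leq j\}$) is complete and correct; this is a mild but real simplification of the paper's argument.
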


\begin{proof}
We suppose towards a contradiction that $A$ is infinite.
Let $(a_i)_{i \in \N}$ be a sequence of distinct elements of $A$.
For every $i \neq j$ we have $|a_i - a_j| \geq h$ for each $h \in O$.
Saturation yields a positive $h \notin O$ such that $|a_i - a_j| \geq 2h$ when $i \neq j$.
Let
$$ \phi(x,y) := | x - y | < h $$
and 
$$ \varphi(x,y) := x - y \in A. $$
Consider the sequences $(a_i)_{i \in \N}$ and $(n1_Z)_{n \in \N}$.
Fix $j,m \in \N$.
Then 
$$\sZ \models \{ \phi(a_j + m1_Z, a_j), \varphi(a_j + m1_Z, m1_z) \}.$$
Suppose $i \neq j$ and $n \neq m$.
Then
$$ | a_i - (a_j + m1_Z) | = | (a_i - a_j) - m1_Z| \geq |a_i - a_j| - |m1_z| > h.  $$
Hence $\sZ \models \neg \phi(a_j + m1_Z, a_i)$.
Furthermore 
$$ (a_j + m1_Z) - n1_Z = a_j + (m - n)1_Z $$
is not an element of $A$ as $a_j$ is the only element of $A$ lying in $a_j + O$.
Hence $\sZ \models \neg \varphi( a_j + m1_Z, n1_Z)$.
So $\sZ$ is not dp-minimal.
\end{proof}

\noindent Proposition~\ref{prop:proof}, together with Theorems~\ref{thm:cluckers} and \ref{thm:zgroup}, yields Theorem~\ref{thm:main}.

\begin{Prop}
\label{prop:proof}
Suppose $\sZ$ is dp-minimal and does not admit a nontrivial definable convex subgroup.
Suppose $X \subseteq Z$ is definable.
Then $X$ is $(Z,<,+)$-definable.
\end{Prop}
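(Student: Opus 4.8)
The plan is as follows. Since $\sZ$ has no nontrivial definable convex subgroup, $(Z,<,+)$ is a $\Z$-group by Theorem~\ref{thm:zgroup}, so by quantifier elimination for Presburger arithmetic the $(Z,<,+)$-definable subsets of $Z$ are exactly the finite Boolean combinations of half-lines $\{x<a\}$ and congruence classes $\{x : x-a\in nZ\}$, with $a\in Z$ and $n\in\N$; the task is to present $X$ in this form. The analytic heart is to prove that there is some $n\in\N_{\geq1}$ with $X\triangle(X+n1_Z)$ finite --- that is, $X$ is periodic with period $n1_Z$ off a finite set (a property shared by every Presburger subset of $Z$). Granting this, everything else is soft, so I describe that part first.

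Suppose $F:=X\triangle(X+n1_Z)$ is finite. Split $Z$ into the $n$ cosets $c+nZ$ of $nZ$, with $c$ running over $0,1_Z,\dots,(n-1)1_Z$, and for each such $c$ consider the definable trace $X_c:=\{z\in Z : c+nz\in X\}$. Since $x\notin F$ implies $x\in X\Leftrightarrow x-n1_Z\in X$, there is a finite set $E_c\subseteq Z$ with $z\in X_c\Leftrightarrow z+1_Z\in X_c$ for all $z\notin E_c$. Now use the following consequence of Lemma~\ref{lem:max} together with the order-reversing automorphism $z\mapsto-z$ (which yields the ``minimum'' form of that lemma): if $T\subseteq Z$ is definable and $I$ is an interval of $Z$, possibly unbounded, such that $z\in T\Leftrightarrow z+1_Z\in T$ for every $z\in I$ with $z+1_Z\in I$, then $T\cap I$ is either empty or all of $I$; indeed, both $T\cap I$ and $I\setminus T$ are then closed under $z\mapsto z+1_Z$ within $I$, and each such nonempty definable set must contain the least element of $I$ (when $I$ has one; the two unbounded ends are handled by the order-reversed statement), so they cannot both be nonempty. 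Applying this on each of the finitely many maximal intervals of $Z\setminus E_c$, and adjoining the subset $X_c\cap E_c$ of the finite set $E_c$, exhibits $X_c$ as a finite union of intervals with endpoints in $E_c\cup\{\pm\infty\}$ and finitely many points; hence $X_c$ is $(Z,<,+)$-definable, so $X\cap(c+nZ)=\{c+nz:z\in X_c\}$ is $(Z,<,+)$-definable, and therefore so is $X=\bigcup_c\bigl(X\cap(c+nZ)\bigr)$.

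It remains to produce the period, and \textbf{this is the step I expect to be the main obstacle}. Suppose for contradiction that $X\triangle(X+n1_Z)$ is infinite for every $n\in\N_{\geq1}$. One first tames the large-scale structure of $X$ with Lemma~\ref{lem:sparse}: for any positive $h\notin O$, the definable set $\{x\in X : X\cap(x,x+h]=\emptyset\}$ meets each coset of $O$ in at most one point --- two such points at a standard distance $k1_Z$ apart would contradict $k1_Z\le h$ --- and so is finite; running this (and its mirror, and the analogous statements for $Z\setminus X$) over suitable nonstandard $h$ shows that, off a bounded set, the long ``gaps'' and ``blocks'' of $X$ are few in number, so one may reduce to an $X$ whose fine structure is rigid. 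One must then convert the assumed aperiodicity of this $X$ into a pair of formulas and sequences witnessing the failure of dp-minimality, in the manner of the proof of Lemma~\ref{lem:sparse}, a nonstandard ``almost-period'' obtained by saturation playing the role of the parameter $h$ there. The delicate point is to arrange that the two coordinates of the resulting array decouple; here the discreteness of $(Z,<,+)$, i.e.\ the presence of the least positive element $1_Z$, and the attainment of suprema and infima guaranteed by Lemma~\ref{lem:max}, are exactly what keep the relevant sets definable. Granting the period, the previous paragraph completes the proof.
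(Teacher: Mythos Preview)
Your reduction --- find $n$ with $X\triangle(X+n1_Z)$ finite, then decompose by cosets of $nZ$ and use Lemma~\ref{lem:max} to force each trace $X_c$ to be a finite union of intervals --- is sound, and once the period exists the conclusion follows essentially as you say. (One small lacuna: when $E_c=\emptyset$ the interval $I=Z$ has neither endpoint, so your min/max trick does not literally apply; but one can argue instead that the definable set $\{d>0:\exists z\in X_c,\ z+d\notin X_c\}$, if nonempty, would have a minimum by Lemma~\ref{lem:max}, contradicting closure of $X_c$ under $+1_Z$; hence $X_c$ is upward closed, and symmetrically downward closed.)

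The genuine gap is precisely where you flag it: you do not prove that a period exists, and the sketch you give does not contain the idea that makes this work. The paper does \emph{not} produce the period by a direct dp-minimality array from aperiodicity as you propose; it routes through Theorem~\ref{thm:adms}. One expands $\sZ$ by a predicate for the convex subgroup $O$; Fact~\ref{fact:externally} keeps $(\sZ,O)$ dp-minimal, and then Theorem~\ref{thm:adms}, applied to the induced structure on $O\cong\Z$, forces every $(\sZ,O)$-definable subset of $O$ to be $(O,<,+)$-definable, hence eventually periodic. In particular each local picture $(X-g)\cap O$ is eventually periodic; the set $P_u$ of $g$ at which $0$ is the upper period point of $(X-g)\cap O$ is $(\sZ,O)$-definable and meets each $O$-coset in at most one element, so is finite by Lemma~\ref{lem:sparse} (applied in $(\sZ,O)$), and likewise for the lower period points $P_\ell$. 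Between consecutive elements of $P_u\cup P_\ell$ the set $X$ agrees with a translate of a finite union of $nZ$-cosets, and these finitely many pieces assemble into a $(Z,<,+)$-definition of $X$.

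Your sketch never invokes Theorem~\ref{thm:adms}, and without the local Presburger-definability of $(X-g)\cap O$ there is no visible source for a candidate period $n$. The observation that $\{x\in X: X\cap(x,x+h]=\emptyset\}$ is finite for each nonstandard $h$ is correct and is in the right spirit, but it only controls large gaps, not fine-scale periodicity; and the promised ``almost-period obtained by saturation'' and the dp-minimality array built from it are not specified. I do not see how to complete the argument along the lines you indicate without passing through Theorem~\ref{thm:adms} (or reproving it en route).
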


\noindent We first make some definitions.
Let $A \subseteq \Z$.
Then $\Z$ is \textbf{periodic} (with period $n$) if for all $k \in \Z$ we have $k \in A$ if and only if $k + n \in A$.
A periodic subset of $\Z$ with period $n$ is a finite union of cosets of $n\Z$ and is hence $(\Z,+)$-definable.
We say that $A$ of $\Z$ is \textbf{eventually periodic} with period $n > 0$ if there are $m,m' \in \Z$ such that:
\begin{enumerate}
    \item if $k \geq m$ then $k \in A$ if and only if $k + n \in A$,
    \item if $k \leq m'$ then $k \in A$ if and only if $k - n \in A$.
\end{enumerate}
It is easy to see that any eventually periodic subset of $\Z$ is $(\Z,<,+)$-definable.
The quantifier elimination for $(\Z,<,+)$ shows that any $(\Z,<,+)$-definable subset of $\Z$ is eventually periodic.
If $A \subseteq \Z$ is eventually periodic with period $n$ then the minimal $m$ with the property above is (if it exists) the \textbf{upper period point} of $A$, the maximal $m'$ (if it exists) with the property above is the \textbf{lower period point} of $A$.
Note that $A$ does not have an upper period point if and only if $A$ does not have a lower period point if and only if $A$ is periodic.

\medskip \noindent Identifying $O$ with $\Z$ we also speak of periodic and eventually periodic subsets of $O$.
Note that $nO + g = [nZ + g] \cap O$ for all $n \in \N$ and $g \in O$.
It follows that if $A \subseteq O$ is eventually periodic then there is a $(Z,<,+)$-definable $B \subseteq Z$ such that $A = B \cap O$.

\medskip \noindent We now prove Proposition~\ref{prop:proof}.

\begin{proof}
Let $(\sZ,O)$ be the expansion of $\sZ$ by a unary predicate defining $O$.
Fact~\ref{fact:externally} shows $(\sZ,O)$ is dp-minimal.
It follows by Theorem~\ref{thm:adms} that any $(\sZ,O)$-definable subset of $O$ is $(O,<,+)$-definable.

Let $P_u$ be the set of $a \in X$ such that $0$ is the upper period point of $[X - a] \cap O$ and $P_\ell$ be the set of $a \in X$ such that $0$ is the lower period point of $[X - a] \cap O$.
Let
$$ \phi(g) := (\exists z \in O, z > 0) (\forall h \geq g)  [h - g \in O]  \Longrightarrow [ h \in X \Leftrightarrow h + z \in X]. $$
Then $P_u$ is defined by the $(\sZ,O)$-formula
$$ \phi(g) \land [(\forall h < g) [h - g \in O] \Longrightarrow \neg \phi(h)]. $$
Similar formulas may be used to see that $P_\ell$ is $(\sZ,O)$-definable.
Note that a coset of $O$ intersects $P_u$ if and only if it intersects $P_\ell$.
Note that $P_u$ contains at most one element from each coset of $O$, likewise for $P_\ell$.
Applying Lemma~\ref{lem:sparse} we see that $P_u$ and $P_\ell$ are finite.

Fix $g \in P_u$.
As $O \cap [X - g]$ is eventually periodic we suppose it has period $n$.
Then there is a finite union $Y$ of cosets of $nZ$ such that 
$$ \{ h \in O: h \geq 0 \} \cap Y = \{ h \in O : h \geq O \} \cap (X - g). $$
Let $C$ be the set of $h \in Z, h \geq g$ such that $[0,h-g] \cap Y = [0,h-g] \cap (X - g)$.
Suppose $C$ is bounded above.
Lemma~\ref{lem:max} shows that $C$ has a maximal element $h$.
It is easy to see that $h$ must be the minimal element of $\{ h' \in P_\ell : h' \geq h \}$.
In this case we have $[g,h] \cap (Y + g) = [g,h] \cap X$.
Suppose $C$ is not bounded above.
Then $[g,\infty) \cap (Y + g) = [g, \infty) \cap X$.
In this case $g$ must be the maximal element of $P_u$.

So we see that if $g \in P_u$ and $h$ is the minimal element of $\{ h' \in P_\ell : h' \geq g\}$ then $[g,h] \cap X$ is $(Z,<,+)$-definable and if $g$ is the maximal element of $P_u$ then $[g,\infty) \cap X$ is $(Z,<,+)$-definable.
A similar argument shows that if $g$ is the minimal element of $P_\ell$ then $(-\infty,g] \cap X$ is $(Z,<,+)$-definable.
It follows that $X$ is $(Z,<,+)$-definable.
\end{proof}

\bibliographystyle{alpha}
\bibliography{Ref}
\end{document}